\author{Olena Karlova}
\title{Extension of continuous functions to Baire-one functions}
\newtheorem{theorem}{Theorem}[section]
\newtheorem{lemma}[theorem]{Lemma}
\newtheorem{proposition}[theorem]{Proposition}
\newtheorem{question}[theorem]{Question}
\newtheorem{example}[theorem]{Example}
\begin{document}

\maketitle

\begin{abstract}
We introduce the notion of $B_1$-retract and investigate the connection between $B_1$- and $H_1$-retracts.
\end{abstract}

\section{Introduction.}

Recall that a function $f:X\to Y$ between topological spaces $X$
and $Y$  {\it belongs to the first Baire class} or {\it is a Baire-one function}, if it is a pointwise limit of a sequence of continuous functions. A function
$f$  {\it belongs to the first Lebesgue class} or {\it is a Lebesgue-one function} if $f^{-1}(F)$ is
a $G_\delta$-set in $X$ for every closed set  $F$ in $Y$. We shall denote by
$B_1(X,Y)$ ($H_1(X,Y)$) the collection of all functions of the first Baire (Lebesgue) class from $X$ to $Y$.

K.~Kuratowski \cite[p.~445]{Ku1} proved that every continuous function  {$f:E\to \mathbb R$} on an arbitrary subset $E$
of a metric space  $X$ can be extended to a Lebesgue-one function on the whole space. According to Lebesgue-Hausdorff Theorem
\cite[p.~402]{Ku1} the extension is also a Baire-one function.

O.~Kalenda and J.~Spurn\'{y} \cite{Ka} showed that if  $E$ is a Lindel\"{o}f hereditarily Baire subspace or $E$ is a Lindel\"{o}f
$G_\delta$-subspace  of a completely regular space $X$ then every Baire-one function
$f:E\to\mathbb R$ can be extended to a Baire-one function on the whole space.

It was proved in \cite{ZKS} that any Baire-one function with values in a $\sigma$-metrizable space with some additional conditions and defined on a Lindel\"{o}f $G_\delta$-subspace of a normal space
 can be extended to a Baire-one function on the whole space.

The question about the extension of the class of range spaces of extendable functions naturally arises.
In \cite{K} the notion of
$H_1$-retract was introduced. A subset $E$ of a topological space
$X$ is called {\it an $H_1$-retract} if there exists a Lebesgue-one function $r:X\to E$ such that $r(x)=x$ for all $x\in E$. It was shown in \cite{K} that $E$ is an $H_1$-retract of $X$ iff for any topological space  $Y$ and for any continuous function $f:E\to Y$ there exists an extension $g\in
H_1(X,Y)$ of $f$. The following result was established in \cite{K}.

\begin{theorem}\label{Aust} \cite[Corollary 3.3]{K}
A set $E$ is an $H_1$-retract of a completely metrizable space $X$ if and only if $E$ is a $G_\delta$-set in $X$.
\end{theorem}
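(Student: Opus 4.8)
The statement is an equivalence, so the plan is to treat the two implications separately; the implication ``$H_1$-retract $\Rightarrow$ $G_\delta$'' uses only that $X$ is metrizable and is the shorter half, while the converse is where complete metrizability enters and is the substantial part.

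For the first implication: suppose $r\colon X\to E$ is a Lebesgue-one retraction, and fix a compatible metric $d$ on $X$. Since $r(X)\subseteq E$ and $r$ fixes $E$ pointwise, $E=\{x\in X:r(x)=x\}$. First I would show that the map $h\colon X\to X\times X$, $h(x)=(x,r(x))$, is of the first Lebesgue class. Take a $\sigma$-locally finite base $\mathcal B=\bigcup_k\mathcal B_k$ of $X$ (Nagata--Smirnov); then $\{B\times B':B,B'\in\mathcal B\}$ is a base of $X\times X$, so every open $U\subseteq X\times X$ is the union of the boxes $B\times B'$ it contains, and grouping these by the levels $(k,l)$ of their two factors gives $h^{-1}(U)=\bigcup_{k,l}\bigcup_{B\in\mathcal B_k}\bigl(B\cap r^{-1}(U_B^{k,l})\bigr)$ for suitable open $U_B^{k,l}\subseteq X$. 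Here each $B$ is open, each $r^{-1}(U_B^{k,l})$ is $F_\sigma$ in $X$ (because $r$ is Lebesgue-one and $U_B^{k,l}\cap E$ is open in $E$), the family $\{B\cap r^{-1}(U_B^{k,l}):B\in\mathcal B_k\}$ is locally finite, and a locally finite union of $F_\sigma$-sets is $F_\sigma$; hence $h^{-1}(U)$ is $F_\sigma$. Composing $h$ with the continuous map $d\colon X\times X\to\mathbb R$ then produces a first-Lebesgue-class function $\psi(x)=d(x,r(x))$, and therefore $E=\psi^{-1}(\{0\})$ is a $G_\delta$-set in $X$.

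For the converse: by the characterization of $H_1$-retracts quoted above from \cite{K} it suffices to extend $\mathrm{id}_E$ to a map in $H_1(X,E)$ (then $f\circ r$ extends any continuous $f\colon E\to Y$). Assume $E\neq\emptyset$. Since $X$ is completely metrizable and $E$ is a $G_\delta$-set, $E$ is completely metrizable (Alexandrov); write $E=\bigcap_nG_n$ with $G_n$ open in $X$, decreasing, $\overline{G_{n+1}}\subseteq G_n$, fix a metric $d$ on $X$ and a complete metric $\rho$ on $E$ compatible with the subspace topology (the standard construction from the gauges $1/d(\cdot,X\setminus G_n)$, which in particular makes $\rho$-Cauchy sequences in $E$ $\rho$-convergent). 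The plan is to build recursively maps $r_n\colon X\to E$ of the first Lebesgue class that are constant on the members of a $\sigma$-discrete decomposition of $X$ into sets that are simultaneously $F_\sigma$ and $G_\delta$ --- the decomposition at step $n$ obtained from a $\sigma$-discrete cover of $(E,\rho)$ by $2^{-n}$-balls, extended to open subsets of $G_n$ and refined along the closed sets $X\setminus G_m$ ($m\le n$) --- arranged so that $r_n=r_{n-1}$ on $X\setminus G_n$, $\sup_x\rho(r_{n-1}(x),r_n(x))\le 2^{-n}$, and $\rho(r_n(e),e)\le 2^{-n}$ for $e\in E$. The first two conditions force $(r_n(x))_n$ to be $\rho$-Cauchy for every $x$ (eventually constant once $x\notin G_n$), so by completeness of $\rho$ the pointwise limit $r(x):=\lim_nr_n(x)$ lies in $E$; the convergence is uniform, and a uniform limit of first-Lebesgue-class maps stays in that class, since $r^{-1}(C)=\bigcap_n r_n^{-1}\bigl(\{y\in E:\rho(y,C)\le 2^{-n}\}\bigr)$ is then a countable intersection of $G_\delta$-sets for every closed $C\subseteq E$. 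Hence $r\in H_1(X,E)$, and the third condition makes $r$ a retraction.

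The technical heart, and the step I expect to be the main obstacle, is carrying out the construction of the $r_n$ so that they are genuinely of the \emph{first} Lebesgue class: with a careless choice of $\sigma$-discrete pieces the preimage of a closed set becomes an uncontrolled countable union of $F_\sigma\cap G_\delta$-sets, which need not be $G_\delta$, and one ends up only in Lebesgue class two. The pieces must be chosen coherently across the levels of the $\sigma$-discrete families and along the closed ``shells'' $X\setminus G_m$, so that preimages of closed sets telescope into countable intersections of open sets, and one must check that this can be maintained while respecting the freezing condition $r_n=r_{n-1}$ on $X\setminus G_n$ throughout the recursion. The remaining ingredients --- the forward implication, the reduction to extending $\mathrm{id}_E$, and the completeness/uniform-limit argument --- are comparatively routine.
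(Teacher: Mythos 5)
First, note that the paper offers no proof of this statement: it is imported verbatim from \cite[Corollary 3.3]{K}, so there is no internal argument to compare yours against. Your forward implication is complete and correct: showing that $h(x)=(x,r(x))$ has $F_\sigma$ preimages of open sets via a $\sigma$-locally finite base of $X\times X$ (grouping boxes by level and using that a locally finite union of $F_\sigma$-sets is $F_\sigma$) is sound, and composing with the metric to get $E=\psi^{-1}(\{0\})$ with $\psi$ of the first Lebesgue class correctly yields that $E$ is $G_\delta$; you are also right that this half needs only metrizability of $X$.

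The converse is where the theorem lives, and there you have only an outline. The architecture (maps $r_n$ constant on the pieces of $\sigma$-discrete partitions into ambiguous sets, frozen on $X\setminus G_n$, uniformly Cauchy in a complete metric on $E$) is the classical Kuratowski-type construction and would work, but the recursive construction of the partitions and of the assigned values is precisely the content of the hard direction, and you explicitly defer it (``the step I expect to be the main obstacle''). Two concrete remarks. (a) The difficulty you anticipate is not where the difficulty actually is: if $r_n$ is constant on the members of a $\sigma$-discrete partition of $X$ into ambiguous sets, then the preimage of \emph{every} subset of $E$ is a union of pieces, hence simultaneously $F_\sigma$ (a $\sigma$-discrete union of $F_\sigma$-sets is $F_\sigma$) and $G_\delta$ (its complement is the preimage of the complementary subset, again $F_\sigma$); so each $r_n$ is automatically of the first Lebesgue class, with no coherence across levels needed for that. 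The genuine work is elsewhere: producing inside each $G_n$ a $\sigma$-discrete partition into ambiguous sets that simultaneously refines the previous partition (so that ``keep the old value'' is well defined on pieces missing $E$) and has traces on $E$ of $\rho$-diameter less than $2^{-n}$ (so that a value can be chosen in $P\cap E$), e.g.\ by disjointifying a $\sigma$-discrete open cover of $E$ inside $G_n$, adding the ambiguous remainder, and taking the common refinement with the previous partition; none of this is carried out. (b) Your quantitative conditions are not simultaneously achievable as stated: from $\rho(r_{n-1}(e),e)\le 2^{-n+1}$ and $\rho(r_n(e),e)\le 2^{-n}$ one only gets $\rho(r_{n-1}(e),r_n(e))\le 3\cdot 2^{-n}$, not $\le 2^{-n}$; any summable bound suffices for the uniform-limit argument, so this is cosmetic, but it signals that the recursion was not actually checked. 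As written, the converse direction is a plan, not a proof.
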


In this paper we introduce the notion of $B_1$-retract and prove several of its properties. Further, using Theorem \ref{Aust} and the generalization of Lebesgue-Hausdorff Theorem, we find out that $B_1$-retracts and $H_1$-retracts are tightly connected in many cases. And in the last section we give two examples which show that even for subsets of the plane ${\mathbb R}^2$ the notions of retract,
$H_1$-retract and $B_1$-retract are different.

\section{$B_1$-retracts and their properties.}

Recall \cite{Bors} that a subset $E$ of a topological space $X$ is said to be {\it a retract of $X$}
if there exists a continuous function $r:X\to E$ such that $r(x)=x$ for all $x\in E$. The function $r$ is called {\it a retraction} of
$X$ onto $E$. It is well-known that a set $E\subseteq X$ is a
retract of $X$ if and only if for any topological space $Y$ every
continuous function $f:E\to Y$ can be extended to a continuous
function $g:X\to Y$.

We call a subset $E$ of a topological space $X$ {\it a
$B_1$-retract} of $X$ if there exists a Baire-one function
\mbox{$r:X\to E$} such that $r(x)=x$ for all
$x\in E$. We call the function $r$  {\it a
$B_1$-retraction} of $X$ onto $E$.

Note that a composition of a continuous function and a Baire-one function is a Baire-one function. This fact and the definition of a $B_1$-retract  immediately imply the following proposition.

\begin{proposition}

Let $X$ be a topological space. A set $E\subseteq X$ is a
$B_1$-retract of $X$ if and only if for any topological space $Y$ every continuous function $f:E\to Y$ can be extended to a Baire-one function $g:X\to Y$.
\end{proposition}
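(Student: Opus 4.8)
The proof is a straightforward "if and only if" with the hard work already outsourced to the remark preceding the statement, so the plan is mostly about assembling the two directions cleanly.

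\medskip

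\textbf{Plan.} The plan is to prove both implications directly from the definition of a $B_1$-retract together with the stated fact that the composition of a continuous function with a Baire-one function is again Baire-one.

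For the \emph{necessity} direction, I would assume $E$ is a $B_1$-retract of $X$ and fix a $B_1$-retraction $r:X\to E$. Given an arbitrary topological space $Y$ and a continuous $f:E\to Y$, I would set $g=f\circ r:X\to Y$. Since $r$ is Baire-one (as a map into $E$, hence into $X$) and $f$ is continuous, the composition $g$ is Baire-one by the quoted remark; one should note here that $r$ being Baire-one into the subspace $E$ is exactly what is needed so that $f\circ r$ makes sense and inherits the Baire-one property. Finally, for $x\in E$ we have $r(x)=x$, so $g(x)=f(r(x))=f(x)$, i.e.\ $g$ extends $f$.

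For the \emph{sufficiency} direction, I would apply the hypothesis to the specific choice $Y=E$ (with its subspace topology) and $f=\mathrm{id}_E:E\to E$, which is continuous. The hypothesis then yields a Baire-one extension $g:X\to E$ of $\mathrm{id}_E$, and $g(x)=\mathrm{id}_E(x)=x$ for every $x\in E$ means precisely that $g$ is a $B_1$-retraction of $X$ onto $E$, so $E$ is a $B_1$-retract.

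\medskip

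\textbf{Main obstacle.} There is essentially no obstacle; the only point requiring a moment's care is the composition step in the necessity direction, namely that a Baire-one map into a subspace composed with a continuous map is Baire-one — but this is granted as the stated fact before the proposition, so the argument is complete. (One could also remark that nothing about $Y$ beyond being a topological space is used, which is why the statement is phrased for arbitrary $Y$.)
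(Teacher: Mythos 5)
Your proof is correct and follows exactly the route the paper intends: the paper gives no written proof, merely noting that the proposition follows immediately from the definition together with the fact that composing a Baire-one map with a continuous map yields a Baire-one map, which is precisely your necessity argument ($g=f\circ r$) combined with the standard sufficiency step ($Y=E$, $f=\mathrm{id}_E$). Nothing is missing.
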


A subset $A$ of a topological space $X$ is called {\it a regular $G_\delta$-set} \cite{Z} if there exists a sequence $(G_n)_{n=1}^\infty$ of open sets in $X$ such that
$$
A=\bigcap\limits_{n=1}^\infty
G_n=\bigcap\limits_{n=1}^\infty  \overline{G}_n.
$$

We say that a topological space $X$ has {\it a regular
$G_\delta$-diagonal} if its diagonal
$\Delta=\{(x,x):x\in X\}$ is a regular $G_\delta$-set in $X\times X$.

Obviously, every regular $G_\delta$-diagonal is closed
$G_\delta$-set in $X\times X$. Note that every space with a $G_\delta$-diagonal is Hausdorff.

\begin{proposition}

Let $X$ be a topological space with a regular $G_\delta$-diagonal and $E$ be a $B_1$-retract of $X$. Then $E$ is a $G_\delta$-set in $X$.

\end{proposition}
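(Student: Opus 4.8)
The plan is to express $E$ as the preimage of the diagonal $\Delta$ under a suitable Baire-one map and then to read off that this preimage is a $G_\delta$-set directly from the \emph{regular} $G_\delta$ representation of $\Delta$. Fix a $B_1$-retraction $r\colon X\to E$; composing with the inclusion $E\hookrightarrow X$ we may view $r$ as a Baire-one self-map of $X$, so $r=\lim_k r_k$ pointwise for some continuous $r_k\colon X\to X$. Set $\psi_k(x)=(x,r_k(x))$ and $\psi(x)=(x,r(x))$. Each $\psi_k\colon X\to X\times X$ is continuous and $\psi_k\to\psi$ pointwise (convergence in $X\times X$ is coordinatewise and the first coordinate does not depend on $k$), so $\psi$ is Baire-one. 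Since $r$ maps into $E$ and is the identity on $E$, we have $\psi(x)\in\Delta$ iff $r(x)=x$ iff $x\in E$; thus $E=\psi^{-1}(\Delta)$.

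The second step is the main point. Write $\Delta=\bigcap_nG_n=\bigcap_n\overline{G_n}$ with $G_n$ open in $X\times X$. I claim
\[
E=\bigcap_{n}\ \bigcap_{N}\ \bigcup_{k\ge N}\ \psi_k^{-1}(G_n),
\]
i.e.\ $x\in E$ iff for every $n$ one has $\psi_k(x)\in G_n$ for infinitely many $k$. If $x\in E$ then $\psi(x)\in\Delta\subseteq G_n$ for each $n$; as $G_n$ is open and $\psi_k(x)\to\psi(x)$, we get $\psi_k(x)\in G_n$ for all large $k$, in particular infinitely often. Conversely, if for each $n$ the sequence $\psi_k(x)$ meets $G_n$ infinitely often, then along the corresponding subsequence $\psi_{k_j}(x)\to\psi(x)$ with $\psi_{k_j}(x)\in G_n\subseteq\overline{G_n}$; since $\overline{G_n}$ is closed, $\psi(x)\in\overline{G_n}$. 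Letting $n$ vary, $\psi(x)\in\bigcap_n\overline{G_n}=\Delta$, so $x\in E$.

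Then the conclusion is immediate: for fixed $n$ and $k$ the set $\psi_k^{-1}(G_n)$ is open, by continuity of $\psi_k$ and openness of $G_n$; hence $\bigcup_{k\ge N}\psi_k^{-1}(G_n)$ is open, $\bigcap_N\bigcup_{k\ge N}\psi_k^{-1}(G_n)$ is a $G_\delta$-set, and the remaining countable intersection over $n$ is again a $G_\delta$-set. Therefore $E$ is a $G_\delta$-set in $X$.

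The step I expect to require care is the choice of the ``infinitely often'' (limit-superior) description of $E$ in the displayed identity rather than the more natural ``eventually in $k$'' one: the latter is also correct but would leave a countable intersection at the innermost level, giving only a $G_{\delta\sigma}$-set. It is precisely the regularity of the diagonal — the fact that $\Delta$ can be written also as $\bigcap_n\overline{G_n}$ with $\overline{G_n}$ closed — that is used to pull the limit point $\psi(x)$ back into $\Delta$, and hence to make the limit-superior description valid; a bare $G_\delta$-diagonal would not suffice here. The remaining verifications (that a Baire-one map $X\to E$ may be treated as a Baire-one map $X\to X$, and that $x\mapsto(x,r_k(x))$ is continuous with pointwise limit $x\mapsto(x,r(x))$) are routine.
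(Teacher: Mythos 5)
Your proof is correct and follows essentially the same route as the paper: both realize $E$ as the preimage of the diagonal under the Baire-one map $x\mapsto(r(x),x)$ (you swap the coordinates, which changes nothing) and use the two representations $\Delta=\bigcap_n G_n=\bigcap_n\overline{G_n}$ to get a $G_\delta$ formula from the approximating continuous maps. The only difference is bookkeeping: the paper assumes the $G_n$ decreasing and uses the diagonal-indexed set $\bigcap_m\bigcup_{n\ge m}h_n^{-1}(G_m)$, whereas your ``infinitely often'' double intersection avoids the monotonicity assumption and makes the reverse inclusion slightly cleaner via the subsequence-in-$\overline{G_n}$ argument.
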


{\bf Proof.} Since the diagonal $\Delta$ is regular $G_\delta$-set, it can be represented as
$\Delta=\bigcap\limits_{n=1}^\infty
G_n=\bigcap\limits_{n=1}^\infty  \overline{G}_n$, where
$(G_n)_{n=1}^\infty$ is a decreasing sequence of open sets in
$X\times X$.
Let $r:X\to E$ be a $B_1$-retraction of $X$ onto $E$.
Consider a function $h:X\to X\times X$, $h(x)=(r(x),x)$.
Then $h\in B_1(X,X\times X)$ and
$E=h^{-1}(\Delta)$. Let $(h_n)_{n=1}^\infty$ be a sequence of continuous functions  such that $h_n(x)\to
h(x)$ for every $x\in X$. We claim that
$$
h^{-1}(\Delta)=\bigcap\limits_{m=1}^\infty
\bigcup\limits_{n=m}^\infty h_n^{-1}(G_m).
$$
Indeed, let $x\in h^{-1}(\Delta)$ and $m\in\mathbb N$. Then
$h(x)\in G_m$. Since $h_n(x)\to h(x)$, there exists a number $n\ge m$ such that
$h_n(x)\in G_m$.
Now let $x$ belongs to the right side of the equality. Then
there exists a sequence $(n_m)_{m=1}^\infty$ such that $n_m\ge
m$ and $h_{n_m}(x)\in G_m$ for every $m\in\mathbb N$. Assume that $h(x)\not\in\Delta$. Then there exists a number $m_0$ such that
$h(x)\not\in \overline{G}_{m_0}$. Since $h_n(x)\to h(x)$, there exists a number $n_0\ge m_0$ such that $h_n(x)\not\in
\overline{G}_{m_0}$ for all $n\ge n_0$. In particular,
$h_{n_{n_0}}(x)\not\in G_{m_0}$ since $n_{n_0}\ge n_0$. Taking into account that
 $G_{n_0}\subseteq G_{m_0}$, we conclude
$h_{n_{n_0}}(x)\not\in G_{n_0}$, a contradiction. Hence, $h(x)\in \Delta$.

Since $G_m$ is an open set in $X\times X$ for every $m$ and $h_n$ is continuous for every $n$, the set $E=h^{-1}(\Delta)$ is
$G_\delta$ in $X$.\hfill$\Box$

Notice that a $B_1$-retract, in general, is not a $G_\delta$-set. For example, let $X$ be a space of all functions $x:[0,1]\to [0,1]$ equipped with a topology of pointwise convergence, $x$ be an arbitrary point from $X$ and $E=\{x\}$. Since $X$ is compact and non-metrizable, the diagonal of $X$ is not a $G_\delta$-set \cite[p.~264]{Eng}, consequently, it is not a regular $G_\delta$-set, and
$E$ is not a $G_\delta$-set in $X$. But, clearly, $E$ is a retract (and, therefore, $E$ is a $B_1$-retract) of the space $X$.

In connection with the previous remark the following open question naturally arises.

\begin{question}
Do there exist a Hausdorff space $X$ with a $G_\delta$-diagonal, but without a regular $G_\delta$-diagonal, and a
$B_1$-retract of $X$, which is not a $G_\delta$-set?

\end{question}

It is well-known that every retract of a connected topological space is also a connected space. The following result states that the same is true for $B_1$-retracts.

\begin{proposition}\label{prop2.4}

Let $X$ be a connected topological space and $E$ be a $B_1$-retract of~$X$. Then $E$ is a connected space.

\end{proposition}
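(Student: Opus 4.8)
The plan is to argue by contradiction, exploiting the freedom in the choice of range space provided by the characterization of $B_1$-retracts (the second Proposition of this section: $E$ is a $B_1$-retract of $X$ iff every continuous map $f:E\to Y$ into an \emph{arbitrary} topological space $Y$ extends to a Baire-one map on $X$). Suppose $E$ is not connected. Then $E=A\cup B$ for some nonempty disjoint sets $A,B$ that are open (equivalently, closed) in the subspace $E$. Let $D=\{0,1\}$ carry the discrete topology and define $\varphi\colon E\to D$ by $\varphi(A)=\{0\}$, $\varphi(B)=\{1\}$; since $A$ and $B$ are clopen in $E$, the map $\varphi$ is continuous.

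By the characterization above, since $E$ is a $B_1$-retract of $X$, the function $\varphi$ admits an extension $g\in B_1(X,D)$. Fix a sequence $(g_n)_{n=1}^\infty$ of continuous functions $g_n\colon X\to D$ with $g_n(x)\to g(x)$ for every $x\in X$. Because $X$ is connected and $D$ is discrete, each $g_n$ is constant, say $g_n\equiv c_n$ with $c_n\in\{0,1\}$. Then for every $x\in X$ the sequence $(c_n)_{n=1}^\infty$ converges to $g(x)$ in $D$; as $D$ is discrete, $(c_n)_{n=1}^\infty$ is eventually equal to a single value $c\in\{0,1\}$, whence $g(x)=c$ for all $x\in X$. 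Thus $g$ is constant, so $\varphi=g|_E$ is constant, contradicting the fact that $A$ and $B$ are both nonempty. Hence $E$ is connected.

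Equivalently, one can bypass the extension characterization and work directly with a $B_1$-retraction $r\colon X\to E$: the composition $\varphi\circ r\colon X\to D$ is Baire-one (a continuous function composed with a Baire-one function is Baire-one, as noted earlier in the section), it restricts to $\varphi$ on $E$, and the identical argument forces it to be constant, again a contradiction. The proof is short, and the only point that needs a moment's thought — the step I would flag as the crux — is the choice of a \emph{two-point discrete} range space: this is precisely what converts "pointwise limit of continuous functions on a connected space" into "pointwise limit of constant functions", which is then constant for free. Note that no separation or metrizability assumptions on $X$ or $E$ are used.
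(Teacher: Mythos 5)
Your proof is correct and is essentially the paper's argument: the paper works directly with the clopen pieces $E_1,E_2$ of a hypothetical disconnection and observes that each continuous approximant $r_n:X\to E$ has connected image, hence lies entirely in $E_1$ or in $E_2$ and eventually in both, which is exactly your statement that each $\varphi\circ r_n$ is constant and the constants must stabilize to two different values. Passing to the two-point discrete space (whether via the extension characterization or via $\varphi\circ r$) is a clean repackaging of the same idea rather than a different route.
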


{\bf Proof.} Let $r:X\to E$ be a $B_1$-retraction of $X$ onto $E$ and
$(r_n)_{n=1}^\infty$ be a sequence of continuous functions
$r_n:X\to E$ such that $r_n(x)\to r(x)$ for every $x\in X$.

Assume the contrary. Then $E=E_1\sqcup E_2$, where
$E_1$ and $E_2$ are open in $E$ non-empty sets. Since $r_n$ is a continuous function and $X$ is a connected space, the set $B_n=r_n(X)$ is connected for every $n$. Then $B_n\subseteq E_1$ or $B_n\subseteq E_2$ for every $n\in\mathbb
N$.
Choose any $x\in E_1$. Then $r_n(x)\to r(x)=x$. Since $E_1$ is an open set in $E$, there exists a number
$n_1$ such that $r_n(x)\in E_1$ for all $n\ge n_1$. Then
$r_n(x)\in B_n\cap E_1$, that is, $B_n\subseteq E_1$ for all $n\ge
n_1$.  Analogously, it can be shown that there exists a number $n_2\in\mathbb
N$ such that
$B_n\subseteq E_2$ for all $n\ge n_2$. Hence, $B_n\subseteq
E_1\cap E_2$ for all $n\ge \max\{n_1,n_2\}$, a contradiction.
\hfill$\Box$

A topological space $Y$ is called {\it an equiconnected space}  \cite{Dug} if there exists a continuous function
$\gamma:Y\times Y\times [0,1]\to Y$, which for every $y',y''\in Y$ and
$t\in [0,1]$ satisfies the following properties:
\medskip

$
(i) \,\,\,\,\,\gamma(y',y'',0)=y',
$
\medskip

$
(ii)\,\,\,\, \gamma(y',y'',1)=y'',
$
\medskip

$
(iii)\,\,\, \gamma(y',y',t)=y'.
$

\medskip
We need the following auxiliary fact from \cite{ZKS}.

\begin{lemma}\label{l1} \cite[Lemma 2.1]{ZKS} Let $X$ be a normal space, $Y$ be an equiconnected space, $(F_i)_{i=1}^n$ be disjoint closed sets in $X$ and $g_i:X\to Y$ be a continuous function for every $1\le i\le n$.
Then there exists a continuous function  \mbox{$g:X\to Y$} such that
$g(x)=g_i(x)$ on $F_i$ for every $1\le i\le n$.
\end{lemma}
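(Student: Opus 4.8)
The plan is to prove Lemma~\ref{l1} by induction on $n$, reducing the general case to the two-set case $n=2$ and then iterating. The base case $n=1$ is trivial: take $g=g_1$. For $n=2$, we are given disjoint closed sets $F_1,F_2$ in the normal space $X$ and continuous maps $g_1,g_2:X\to Y$, and we want a continuous $g:X\to Y$ agreeing with $g_i$ on $F_i$. The idea is to interpolate between $g_1$ and $g_2$ using the equiconnecting map $\gamma$. Since $X$ is normal and $F_1,F_2$ are disjoint and closed, by Urysohn's lemma there is a continuous $\varphi:X\to[0,1]$ with $\varphi\equiv 0$ on $F_1$ and $\varphi\equiv 1$ on $F_2$. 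Then define $g(x)=\gamma(g_1(x),g_2(x),\varphi(x))$. This is continuous as a composition of continuous maps, and on $F_1$ we get $g(x)=\gamma(g_1(x),g_2(x),0)=g_1(x)$ by property $(i)$, while on $F_2$ we get $g(x)=\gamma(g_1(x),g_2(x),1)=g_2(x)$ by property $(ii)$. (Property $(iii)$ is not needed for $n=2$, but it will matter for the inductive step, or rather for a cleaner bookkeeping.)

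For the inductive step, suppose the statement holds for $n-1$ sets. Given disjoint closed $F_1,\dots,F_n$ and continuous $g_1,\dots,g_n$, first apply the induction hypothesis to $F_1,\dots,F_{n-1}$ and $g_1,\dots,g_{n-1}$ to obtain a continuous $h:X\to Y$ with $h|_{F_i}=g_i$ for $1\le i\le n-1$. Now I want to "patch in" $g_n$ on $F_n$ without disturbing the values on $F_1\cup\dots\cup F_{n-1}$. Set $A=F_1\cup\dots\cup F_{n-1}$, which is closed and disjoint from $F_n$. The obstacle is that $h$ and $g_n$ need not agree on the overlap region — but there is no overlap, since $A\cap F_n=\varnothing$. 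So I can apply the already-proved case $n=2$ to the disjoint closed sets $A$ and $F_n$ with the continuous functions $h$ (on $A$) and $g_n$ (on $F_n$): there is a continuous $g:X\to Y$ with $g|_A=h|_A$ and $g|_{F_n}=g_n|_{F_n}$. Then $g|_{F_i}=h|_{F_i}=g_i$ for $i\le n-1$ and $g|_{F_n}=g_n$, completing the induction.

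I expect the only real subtlety is making sure the $n=2$ case is applied with genuinely continuous "input data" on each of the two closed sets — here $h$ and $g_n$ are globally continuous on all of $X$, so feeding them into the formula $g(x)=\gamma(h(x),g_n(x),\varphi(x))$ with a Urysohn function $\varphi$ separating $A$ from $F_n$ is unproblematic. No completeness, metrizability, or separability of $Y$ is used; only normality of $X$ (for Urysohn) and the equiconnecting structure of $Y$. Property $(iii)$ of $\gamma$ is not logically required for this argument, though it reflects the fact that $\gamma$ restricts to the identity on the diagonal and is the feature that makes $\gamma$ a genuine "equiconnection." I would present the $n=2$ construction in full and then state the induction briefly, since the inductive step is just a two-line reduction to the $n=2$ case applied to $F_1\cup\dots\cup F_{n-1}$ and $F_n$.
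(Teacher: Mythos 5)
Your proposal is correct and follows essentially the same route as the paper's proof: the $n=2$ case via a Urysohn function fed into $\gamma(g_1(x),g_2(x),\varphi(x))$, and the inductive step reducing $n$ sets to the two-set case applied to $F_1\cup\dots\cup F_{n-1}$ and $F_n$. Your side remark that property $(iii)$ of $\gamma$ is not logically needed here is accurate and consistent with the paper's argument.
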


\begin{proof}
  The proof is by induction on $n$. Let $n=2$. Since $F_1$ and $F_2$ are disjoint and closed, by Urysohn's Lemma
\cite[p.~41]{Eng} there exists a continuous function $\varphi:X\to
[0,1]$ such that $\varphi(x)=0$ on $F_1$ and
$\varphi(x)=1$ on $F_2$. The space $Y$ is equiconnected, therefore
there exists a continuous function $\gamma:Y\times Y\times
[0,1]\to Y$, which satisfies  { (i) -- (iii)}. Let
$$
g(x)=\gamma(g_1(x),g_2(x),\varphi(x))
$$
for every
$x\in X$.
Clearly, $g:X\to Y$ is continuous. If $x\in F_1$, then
$\varphi(x)=0$ and $g(x)=g_1(x)$. If $x\in F_2$, then
$\varphi(x)=1$ and $g(x)=g_2(x)$.

Assume the lemma is true for all  $1\le k<n$. We will prove it for
$k=n$. According to the assumption, there exists a continuous function $\tilde{g}:X\to Y$ such that
$\tilde{g}|_{F_i}=g_i$ for every $1\le i<n$. Since
$F=\bigcup\limits_{i=1}^{n-1}F_i$ and $F_n$ are disjoint and closed in
$X$, by the assumption there exists such a continuous function
$g:X\to Y$ that $g|_F=\tilde{g}$ and
$g|_{F_n}=g_n$. Then $g|_{F_i}=g_i$ for every $1\le i\le
n$.
\end{proof}

We call a subset $A$ of a topological space $X$ {\it an ambiguous set} if it is simultaneously
$F_\sigma$ and $G_\delta$ in $X$. Recall that a topological space $X$ is called {\it perfectly normal} if it is normal and every closed subset of $X$ is a $G_\delta$-set.

\begin{theorem} Let $X$ be a perfectly normal space, $E\subseteq
X$ be an equiconnected space, $E=\bigcup\limits_{n=1}^\infty
E_n$, and the following conditions hold:

(1) $E_n\cap E_m=\O$ if $n\ne m$;

(2) $E_n$ is an ambiguous set in $E$ for every $n\in\mathbb
N$;

(3) $E_n$ is a $B_1$-retract of $X$ for every $n\in\mathbb
N$;

(4) $E$ is a $G_\delta$-set in $X$.

Then  $E$ is a $B_1$-retract of $X$.
\end{theorem}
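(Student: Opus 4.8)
The goal is to produce a $B_1$-retraction $r\colon X\to E$, i.e.\ a Baire-one map with $r|_E=\mathrm{id}_E$ (the definition of $B_1$-retract). For the set-up: a subspace of a perfectly normal space is perfectly normal, so $E$ is perfectly normal; by (4) fix a decreasing sequence $(G_m)$ of open sets in $X$ with $E=\bigcap_m G_m$; for each $n$ fix a $B_1$-retraction $r_n\colon X\to E_n$ and continuous maps $r_{n,k}\colon X\to E_n\subseteq E$ with $r_{n,k}\to r_n$ pointwise; and fix $\gamma$ witnessing (1). Since $r_n(x)=x$ forces $x\in E_n$, one has $E_n=\{x:r_n(x)=x\}$; with (2) and (4), each $E_n$, and each finite union $E_1\cup\dots\cup E_n$, is $G_\delta$ in $X$ and is the trace on $E$ of a $G_\delta$ subset of $X$.

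The main idea is to build continuous maps $s_k\colon X\to E$ with $s_k\to r$ pointwise, obtained at level $k$ as a telescoped equiconnecting blend of the $k$-th approximants: set $u_0\equiv e_0\in E$, $u_j:=\gamma(u_{j-1},r_{j,k},\beta_{j,k})$ for $1\le j\le k$, and $s_k:=u_k$, where $\beta_{j,k}\colon X\to[0,1]$ are continuous functions chosen in advance so that, as $k\to\infty$, $\beta_{j,k}\to\beta_j$ pointwise for a Baire-one weight $\beta_j\colon X\to[0,1]$ which equals $1$ on $E_j$, equals $0$ on $E_1\cup\dots\cup E_{j-1}$, and tends to $0$ off $E$ (in the sense needed for convergence there). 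Each $s_k$ is continuous, so if $r:=\lim_k s_k$ exists pointwise it is Baire-one into $E$. Using the identities $\gamma(\cdot,\cdot,0)=\pi_1$, $\gamma(\cdot,\cdot,1)=\pi_2$ and the fact that for fixed $j$ the functions $u_j$ (as $k$ varies) converge pointwise (because $r_{j,k}$ and $\beta_{j,k}$ do), one checks that $s_k(x)$ converges for every $x$, that the limit off $E$ lies in $E$, and that for $x\in E_m$ the $m$-th blend has weight $\to1$ and second coordinate $r_{m,k}(x)\to x$ while the later blends carry weight $\to0$, forcing $s_k(x)\to x$; hence $r|_E=\mathrm{id}_E$.

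Everything hinges on producing the weights $\beta_j$, and this — together with the synchronization of the approximation indices in the double limit $s_k\to r$ (delicate since $E$ need not be metrizable) — is what I expect to be the main obstacle. On $E$ the required restriction $\beta_j|_E=\chi_{E_j}$ is Baire-one precisely because $E_j$ is ambiguous in the perfectly normal space $E$: one writes $E_j=\bigcup_i A_{j,i}$, $E\setminus E_j=\bigcup_i A_{j,i}'$ with $A_{j,i},A_{j,i}'$ increasing, closed in $E$, disjoint at each $i$, and applies Urysohn's lemma inside $E$. The difficulty is to transport this to a Baire-one function on all of $X$: since $E$ is only $G_\delta$ — neither closed nor $F_\sigma$ — in $X$, the partition of $E$ into the ambiguous sets $E_j$ does not extend to a partition of $X$, $\chi_{E_j}$ is not Baire-one on $X$, and $X$-closures of pieces closed in $E$ may overlap off $E$. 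The device that makes the transport possible is condition (3): the retraction $r_j$ (with its approximants $r_{j,k}$) carries the relative-to-$E$ ambiguity of $E_j$ into $X$, and then (2), (4) and the perfect normality of $X$ let one assemble the $\beta_{j,k}$ by an induction that interleaves the enumeration of the $E_j$, the index $i$ of the $F_\sigma/G_\delta$ representations in $E$, the index $m$ of $E=\bigcap_m G_m$, and the approximation index $k$, with repeated uses of Urysohn's lemma (and of Lemma~\ref{l1}) in $X$. Condition (1) enters only to make the $\gamma$-blends and Lemma~\ref{l1} available. (An alternative organisation, matching the introduction's remark, is to first prove — by a softer countable patching, not requiring equiconnectedness — that $E$ is an $H_1$-retract of $X$, and then invoke the generalization of the Lebesgue–Hausdorff theorem to upgrade, using (1), an $H_1$-retraction into the equiconnected space $E$ to a $B_1$-retraction.)
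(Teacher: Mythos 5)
Your proposal does not close: the construction on which, as you yourself say, ``everything hinges'' --- the weights $\beta_{j,k}$ --- is never carried out, only described as the main obstacle to be overcome by an unspecified interleaved induction. Moreover, the diagnosis that blocks you is mistaken. You assert that ``the partition of $E$ into the ambiguous sets $E_j$ does not extend to a partition of $X$,'' but it does, and this extension is the actual key to the theorem. Since $X$ is perfectly normal, every set ambiguous in the subspace $E$ is the trace on $E$ of a set ambiguous in $X$ (Kuratowski, \cite[p.~359]{Ku1}); so one chooses ambiguous sets $C_n\subseteq X$ with $C_n\cap E=E_n$, disjointifies them by $D_n=C_n\setminus\bigcup_{k<n}C_k$ (condition (1) guarantees $D_n\cap E=E_n$ survives), writes $X\setminus E=\bigcup_n F_n$ with $F_n$ closed (possible by (4) and perfect normality), and sets $X_1=F_1\cup D_1$, $X_n=(F_n\cup D_n)\setminus\bigcup_{k<n}(F_k\cup D_k)$. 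The $X_n$ are pairwise disjoint ambiguous sets covering $X$ with $X_n\cap E=E_n$. With this partition in hand no weights are needed at all: define $r=r_n$ on $X_n$, write each $F_\sigma$-set $X_n$ as an increasing union of closed sets $B_{n,m}$, put $A_{n,m}=B_{n,m}$ for $n\le m$ and $A_{n,m}=\emptyset$ otherwise, and use Lemma~\ref{l1} (this is where equiconnectedness and normality enter) to glue the finitely many continuous approximants $r_{n,m}$, $n\le m$, into a single continuous $g_m:X\to E$. Then $g_m\to r$ pointwise because every $x$ lies in some $X_n$ and hence eventually in $A_{n,m}$. Your telescoped $\gamma$-blend, with its delicate double limit and weights that must ``tend to $0$ off $E$,'' is an attempt to route around a partition that in fact exists.

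A further error: your parenthetical alternative --- first produce an $H_1$-retraction and then upgrade it by a Lebesgue--Hausdorff-type theorem --- is unavailable under the stated hypotheses. The upgrading results used in this paper (Theorems~\ref{Vesely} and~\ref{Fosgerau}) require the range to be metrizable and arcwise connected and locally (arcwise) connected; here $E$ is only assumed equiconnected, and none of those properties is part of the hypotheses. So that route cannot substitute for the missing construction.
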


{\bf Proof.} From the condition (2) and \cite[p.~359]{Ku1} it follows that for every
$n\in\mathbb N$ there exists an ambiguous set $C_n$ in $X$ such that $C_n\cap E=E_n$. Let $D_1=C_1$ and
$D_n=C_n\setminus \bigcup\limits_{k<n}C_k$ if $n\ge 2$. Then
$D_n$ is an ambiguous set for every $n$. Moreover,
$D_n$ are disjoint sets and $D_n\cap E=E_n$ for every $n\in\mathbb N$.
Since $X\setminus E$ is an $F_\sigma$-set in $X$, there exists a sequence $(F_n)_{n=1}^\infty$ of closed subsets of $X$ such that $X\setminus E=\bigcup\limits_{n=1}^\infty F_n$. Let
$X_1=F_1\cup D_1$, and  $X_n=(F_n\cup D_n)\setminus
(\bigcup\limits_{k<n}(F_k\cup D_k))$ if $n\ge 2$.
Obviously, $X_n$ is an ambiguous set in $X$ for every $n$,
$X_n\cap X_m=\O$ if $n\ne m$, and $X=\bigcup\limits_{n=1}^\infty
X_n$.

We will show that $X_n\cap E=E_n$ for every $n\in\mathbb N$. Indeed,
if $x\in X_n\cap E$, then
$$
x\in (F_n\cup D_n)\cap E=(F_n\cap E)\cup (D_n\cap E)=D_n\cap
E=E_n.
$$
If $x\in E_n$, then $x\in D_n\cap E$, therefore $x\in D_n$ and
$x\not\in F_m$ for all $m$. Moreover, $x\not\in D_k$ for all
$k<n$, since $D_n\cap D_k=\O$ if $n\ne k$. Hence, $x\in
X_n\cap E$.

According to (3), there exists a sequence of
$B_1$-retractions $r_n:X\to E_n$. Let $r(x)=r_n(x)$
if $x\in X_n$. We will show that $r\in B_1(X,E)$.

For every $n\in\mathbb N$ there exists a sequence
$(r_{n,m})_{m=1}^\infty$ of continuous functions $r_{n,m}:X\to
E_n$ such that $\lim\limits_{m\to\infty} r_{n,m}(x)=r_n(x)$ for every $x\in X$. Notice that $\lim\limits_{m\to\infty}
r_{n,m}(x)=r(x)$ on $X_n$. The set $X_n$ is $F_\sigma$, therefore,
for every $n$ there exists an increasing sequence
$(B_{n,m})_{m=1}^\infty$ of closed subsets $B_{n,m}$ of $X$ such that $X_n=\bigcup\limits_{m=1}^\infty B_{n,m}$. Let
$A_{n,m}=\O$ if $n>m$, and $A_{n,m}=B_{n,m}$ if $n\le m$. Then Lemma \ref{l1} implies that for every $m\in\mathbb N$ there exists a continuous function  $g_m:X\to E$ such that
$g_m|_{A_{n,m}}=r_{n,m}$ since a family $\{A_{n,m}:n\in
{\mathbb N}\}$ is finite for every $m\in {\mathbb N}$.

It remains to prove that $g_m(x)\to r(x)$ on $X$. Fix $x\in X$.
Then $x\in X_n$ for some $n\in\mathbb N$. The sequence $(A_{n,m})_{m=1}^\infty$ is increasing, and, in consequence,
there exists $m_0$ such that $x\in A_{n,m}$ for every $m\ge m_0$.
Then $g_m(x)=r_{n,m}(x)$ for all $m\ge m_0$. Hence,
$\lim\limits_{m\to\infty}
g_m(x)=\lim\limits_{m\to\infty}r_{n,m}(x)=r(x)$. Therefore,
$r\in B_1(X,Y)$.

It is easy to see that $r(x)=x$ for all $x\in E$. Hence,
$r$ is a $B_1$-retraction of $X$ onto $E$.\hfill$\Box$

\section{The connection between $B_1$-retracts and $H_1$-retracts.}

Recall that a family ${\mathcal A}$ of subsets of a topological space $X$ is {\it discrete} if every point
$x\in X$ has a neighbourhood $U$ that intersects at most one of the sets $A\in {\mathcal A}$. A family ${\mathcal A}=(A_i:i\in
I)$ of subsets of a topological space $X$ is said to be {\it strongly discrete} \cite{Ve} if there is a discrete family ${\mathcal
G}=(G_i:i\in I)$ of open sets in $X$ such that
$\overline{A}_i\subseteq G_i$ for any $i\in I$. A family
${\mathcal A}$ is {\it $\sigma$-discrete (strongly
$\sigma$-discrete)} if it can be represented as the union of countably many
discrete (strongly discrete) families in $X$.

A family ${\mathcal B}$ of subsets of topological space $X$ is
{\it a base for a function} \mbox{$f:X\to Y$} if for any open set $V$ in $Y$ there exists a subfamily
\mbox{${\mathcal B}_V\subseteq {\mathcal B}$} such that
$f^{-1}(V)=\bigcup {\mathcal B}_V$. If
${\mathcal B}$ is (strongly) $\sigma$-discrete then it is called
{\it (strongly) $\sigma$-discrete base for $f$} and function
\mbox{$f:X\to Y$} with (strongly) $\sigma$-discrete base is called
{\it (strongly) $\sigma$-discrete function}. We shall denote by $\Sigma(X,Y)$ ($\Sigma^*(X,Y)$) the set of all (strongly)
$\sigma$-discrete functions from $X$ to $Y$.

A topological space $X$ is {\it collectionwise normal} if $X$ is $T_1$-space and for each discrete family
$(F_i:i\in I)$ of closed sets there exists a discrete family
$(G_i:i\in I)$ of open sets such that $F_i\subseteq G_i$
for every $i\in I$. It is easy to see that a space is collectionwise normal if and only if every discrete family of its subsets is strongly discrete.

Note that any function with values in a second countable topological space is strongly $\sigma$-discrete. R.~Hansell
\cite{Ha3} proved that every Lebesgue-one function with a complete metric domain space and a metric range space is $\sigma$-discrete. Taking into account that a complete metric space is collectionwise normal, we obtain the strongly $\sigma$-discreteness of such a function.

Recall that a topological space $X$ is {\it arcwise connected} if for any two points $x$ and $y$ from $X$
there exists a continuous function $f:[0,1]\to X$ such that $f(0)=x$ and
$f(1)=y$. A space $X$ is called {\it locally arcwise connected} if for every $x\in X$ and for any its neighbourhood $U$ there exists a neighbourhood $V$ of $x$ such that for each $y\in V$ there is a continuous function  $f:[0,1]\to U$ such that $f(0)=x$ and $f(1)=y$.

We shall need the following results of L.~Vesel\'{y} \cite{Ve} and M.~Fosgerau~\cite{F} concerning the equality between Baire and Lebesgue classes.

\begin{theorem}\label{Vesely}\cite[Theorem 3.7(i)]{Ve} Let $X$ be a normal space, $Y$ be an arcwise connected and locally arcwise connected metric space. Then $$H_1(X,Y)\cap \Sigma^*(X,Y)=B_1(X,Y).$$
\end{theorem}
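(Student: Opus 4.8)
The plan is to prove the two inclusions separately, and the first thing to note is that $B_1(X,Y)\subseteq H_1(X,Y)\cap\Sigma^*(X,Y)$ will need neither the normality of $X$ nor the connectedness hypotheses on $Y$. For $f=\lim_n f_n$ with $f_n\colon X\to Y$ continuous and $U\subseteq Y$ open I would set $F^U_k=\{y\in Y:d(y,Y\setminus U)\ge 1/k\}$ and verify, using pointwise convergence (if $f(x)\in U$ then $f_n(x)\in F^U_k$ for all large $n$ as soon as $1/k<d(f(x),Y\setminus U)$, while if $f_n(x)\in F^U_k$ for all large $n$ then $f(x)\in F^U_k$ since this set is closed), the Lebesgue--Hausdorff identity
\[
f^{-1}(U)=\bigcup_{k,m\in\mathbb N}\ \bigcap_{n\ge m}f_n^{-1}(F^U_k);
\]
this shows $f^{-1}(U)$ is $F_\sigma$, hence $f^{-1}(F)$ is $G_\delta$ for closed $F$ and $f\in H_1(X,Y)$. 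For the same $f$ I would fix a $\sigma$-discrete base $\mathcal U=\bigcup_p\mathcal U_p$ of the metric space $Y$ and consider the sets $C^U_{k,m}=\bigcap_{n\ge m}f_n^{-1}(F^U_k)$: since $F^U_k\subseteq U$ and $\mathcal U_p$ is discrete in $Y$, the family $\{C^U_{k,m}:U\in\mathcal U_p\}$ is discrete in $X$ for fixed $p,k,m$, and passing to a slightly larger open set between $F^U_k$ and $U$ shows it is strongly discrete; the identity above says these sets form a base for $f$, so $f\in\Sigma^*(X,Y)$.

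For the reverse inclusion $H_1(X,Y)\cap\Sigma^*(X,Y)\subseteq B_1(X,Y)$, fix such an $f$. The first step I would establish is a decomposition lemma: for every $\varepsilon>0$ there is a partition $X=\bigsqcup_{i\in I}E_i$ into non-empty $F_\sigma$ sets with $\operatorname{diam}f(E_i)\le\varepsilon$, where $I=\bigsqcup_p I_p$ and each $\{E_i:i\in I_p\}$ is strongly discrete, and moreover the partition for $\varepsilon=1/(n+1)$ refines the one for $\varepsilon=1/n$. The ingredients here are: a $\sigma$-discrete open cover of $Y$ by sets of diameter $\le\varepsilon$ (available since $Y$ is metric, hence paracompact); the fact that $f^{-1}$ of an open set is $F_\sigma$ (Lebesgue class one); the strongly $\sigma$-discrete base of $f$, which (in the spirit of Hansell's work on $\sigma$-discrete functions) lets one replace the overlapping preimages $f^{-1}(V)$ by a strongly $\sigma$-discretely decomposed family of $F_\sigma$ sets subordinate to them; and the normality of $X$, used to disjointify the resulting family while keeping its layers strongly discrete.

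The second step is to construct, for $\varepsilon=1/n$, a continuous map $g_n\colon X\to Y$ and an increasing sequence $Z_1\subseteq Z_2\subseteq\cdots$ with $\bigcup_n Z_n=X$ so that $d(g_n(x),f(x))\le 1/n$ whenever $x\in Z_n$; then $g_n(x)\to f(x)$ for every $x$, whence $f\in B_1(X,Y)$. I would pick $y_i\in f(E_i)$ for the stage-$n$ partition, let $g_n$ be the constant $y_i$ on a large closed part of $E_i$, and interpolate over the remaining collar regions by arcs in $Y$. Since the stage-$n$ partition refines the stage-$(n-1)$ one, each such collar lies inside a single stage-$(n-1)$ piece, on which the relevant values of $f$, of $y_i$, and of the already built $g_{n-1}$ differ by at most $1/(n-1)$; here local arcwise connectedness lets the interpolating arcs be chosen inside small balls, so that $g_n$ remains within a fixed multiple of $1/n$ of $f$ on $Z_n$, while global arcwise connectedness is needed to start the construction at stage $1$ and to join values that come from far-apart pieces.

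The step I expect to be the main obstacle is the patching inside the second step. Within one stage one must glue the countably many strongly discrete layers $\{E_i:i\in I_p\}$, $p\in\mathbb N$, into a single continuous map, and the difficulty, when doing this one layer at a time, is that adjusting $g_n$ on the open sets attached to layer $p$ may spoil the values already fixed by the earlier layers. I would control this by normality of $X$---the closed pieces belonging to different layers are pairwise disjoint, so each adjustment can be confined to an open set missing a neighbourhood of the pieces already handled---together with the observation that all values created by the interpolations lie on the arcs issuing from one fixed basepoint, i.e.\ in a star-shaped and hence contractible subset of $Y$, so that each successive adjustment can be realised by an honest homotopy with no obstruction. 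Carrying out this bookkeeping simultaneously with the $1/n$-closeness of $g_n$ to $f$ on $Z_n$ and with the requirement $\bigcup_n Z_n=X$ is, I expect, the technical heart of the argument, and it is there that the hypotheses ``$X$ normal'' and ``$Y$ arcwise and locally arcwise connected'' are genuinely used.
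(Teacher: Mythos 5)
This theorem is quoted in the paper from Vesel\'{y} \cite[Theorem 3.7(i)]{Ve} without proof, so there is no in-paper argument to compare yours against; I can only assess your proposal on its merits and against the strategy known from the literature. Your proof of the inclusion $B_1(X,Y)\subseteq H_1(X,Y)\cap\Sigma^*(X,Y)$ is complete and correct: the Lebesgue--Hausdorff identity with the sets $F^U_k$ gives $H_1$, and your observation that the sets $C^U_{k,m}$ are closed, sit inside $f_m^{-1}(F^U_k)$, and are separated by the discrete open family $f_m^{-1}(\{y:d(y,Y\setminus U)>1/(k+1)\})$, $U\in\mathcal U_p$, does yield a strongly $\sigma$-discrete base. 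As you note, neither normality of $X$ nor connectedness of $Y$ is needed there.

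The reverse inclusion, however, is only an outline, and the step you yourself flag as the heart of the matter is exactly where it breaks down as written. Two concrete problems. First, in the decomposition lemma: disjointifying a strongly $\sigma$-discretely decomposable cover by $F_\sigma$ sets produces differences of $F_\sigma$ sets, i.e.\ intersections of $F_\sigma$ sets with $G_\delta$ sets, and in a merely normal (not perfectly normal) space these need not be $F_\sigma$; you need the pieces to remain $F_\sigma$ in order to exhaust them by the closed sets on which $g_n$ is prescribed and to get $\bigcup_n Z_n=X$. This is repairable via Hansell's machinery of ambiguous sets, but it is not automatic. Second, and more seriously, the gluing of the countably many discrete layers: interpolating on a collar between the map already built from layers $1,\dots,p$ (which is \emph{not} constant there) and a new constant value requires a family of arcs varying continuously with the endpoint, i.e.\ something like an equiconnecting function $\gamma:Y\times Y\times[0,1]\to Y$ as in Lemma~\ref{l1}; arcwise plus locally arcwise connectedness of a metric space gives you individual arcs, not a continuous selection of them. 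Your proposed remedy --- that all interpolated values lie on arcs issuing from one basepoint, hence in a ``star-shaped and so contractible'' set --- does not make sense in a general metric space: a union of arcs through a point need not be contractible, locally connected, or even closed, and contractibility of the value set would not by itself produce the required continuous homotopies relative to the already-fixed closed pieces. So the decisive step of the hard inclusion is missing, and the mechanism you sketch for it would fail as stated; the actual argument in \cite{Ve} has to work harder precisely at this point.
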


\begin{theorem}\label{Fosgerau}\cite[Theorem 2]{F} Let $Y$ be a complete metric space. Then the following conditions are equivalent:

(i) $Y$ is connected and locally connected;

(ii) $H_1(X,Y)\cap \Sigma(X,Y)=B_1(X,Y)$ for any metric space  $X$.
\end{theorem}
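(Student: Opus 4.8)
The plan is to prove the two implications separately. One inclusion is automatic: for every metric space $X$ one has $B_1(X,Y)\subseteq H_1(X,Y)\cap\Sigma(X,Y)$, the membership in $H_1(X,Y)$ being the Lebesgue part of the Lebesgue--Hausdorff theorem (the preimage of an open set under a pointwise limit of continuous maps into a metric space is $F_\sigma$), and the membership in $\Sigma(X,Y)$ coming from Hansell's theorem that a Borel --- in particular a Baire-one --- map from a metric space to a metric space is $\sigma$-discrete. So the content of (ii) is the reverse inclusion, and the contrapositive of $(ii)\Rightarrow(i)$ only requires producing, for some metric space $X$, a function lying in $H_1(X,Y)\cap\Sigma(X,Y)$ but not in $B_1(X,Y)$.

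For $(i)\Rightarrow(ii)$ I would reduce everything to Theorem~\ref{Vesely}. It is classical that a connected, locally connected, completely metrizable space is arcwise connected; applying this to $Y$ and to its connected open subspaces --- which are again connected, locally connected and completely metrizable, an open subset of a complete metric space being $G_\delta$ and hence completely metrizable --- shows that $Y$ is both arcwise connected and locally arcwise connected. Moreover every metrizable space is normal and collectionwise normal, so every discrete family of subsets of $X$ is strongly discrete and therefore $\Sigma(X,Y)=\Sigma^*(X,Y)$. Now Theorem~\ref{Vesely} yields $H_1(X,Y)\cap\Sigma(X,Y)=H_1(X,Y)\cap\Sigma^*(X,Y)=B_1(X,Y)$, i.e.\ (ii).

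For $(ii)\Rightarrow(i)$ I argue by contraposition. Suppose first that $Y$ is disconnected, say $Y=Y_0\sqcup Y_1$ with $Y_0,Y_1$ nonempty clopen; fix $a\in Y_0$, $b\in Y_1$, put $X=[0,1]$ and define $f$ by $f\equiv a$ on $[0,\tfrac12)$ and $f\equiv b$ on $[\tfrac12,1]$. The range of $f$ is finite, so $f\in\Sigma^*(X,Y)\subseteq\Sigma(X,Y)$; every preimage of a closed set equals one of $\emptyset$, $[0,\tfrac12)$, $[\tfrac12,1]$, $[0,1]$, each a $G_\delta$ in $[0,1]$, so $f\in H_1(X,Y)$. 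But $f\notin B_1(X,Y)$: if $f=\lim_n f_n$ with the $f_n$ continuous, then each connected set $f_n([0,1])$ lies in $Y_0$ or in $Y_1$; since $Y_0$ is open and $f_n(0)\to a\in Y_0$, eventually $f_n([0,1])\subseteq Y_0$, forcing $f_n(1)\in Y_0$ and contradicting $f_n(1)\to b\in Y_1$. Hence (ii) fails.

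The remaining case, $Y$ connected but not locally connected, is where I expect the real work to be. Fix $y_0$ at which local connectedness fails: there is an open $U\ni y_0$ whose component $C$ containing $y_0$ is not a neighbourhood of $y_0$, so one may choose $y_n\in U\setminus C$ with $y_n\to y_0$, and connectedness of $Y$ gives $U\ne Y$, hence a point $z_0\in Y\setminus U$. The strategy is to \emph{localize} the disconnected case: build a metric space $X$ and $f:X\to Y$ with $f\in H_1(X,Y)\cap\Sigma(X,Y)$ such that any continuous $g$ approximating $f$ is forced, near a point of $X$ where $f$ is badly discontinuous, to map a connected piece of $X$ into $U$ and hence into a single component of $U$ --- which is incompatible with $g$ also having to take values near both $y_0\in C$ and some $y_n\notin C$. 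The tension to be resolved is that, on the one hand, $X$ must be rich enough (dense-in-itself, or connected like $[0,1]$) for the continuous approximants not to escape the obstruction --- on a mere convergent sequence every function into $Y$ is Baire-one --- while, on the other hand, $X$ and $f$ should be simple enough that $f\in H_1(X,Y)\cap\Sigma(X,Y)$ is checkable by hand, which is automatic when $X$ is countable, since then every subset of $X$ is ambiguous and every map out of $X$ has a countable base. A further difficulty absent from the disconnected case is that the separation of $y_0$ from the $y_n$ inside $U$ is not witnessed by a clopen partition of $U$, so an approximant's image cannot be trapped in $U$ by one open condition and must be controlled through the prescribed values of $f$ near its discontinuity. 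Once such $X$ and $f$ are produced, $f\in(H_1(X,Y)\cap\Sigma(X,Y))\setminus B_1(X,Y)$, completing the contrapositive.
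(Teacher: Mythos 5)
The paper offers no proof of this statement at all: Theorem~\ref{Fosgerau} is imported verbatim from Fosgerau's paper \cite[Theorem 2]{F}, so there is no internal argument to compare yours against. Judged on its own terms, your proposal is sound in three of its four pieces. The inclusion $B_1(X,Y)\subseteq H_1(X,Y)\cap\Sigma(X,Y)$ is standard (though your appeal to Hansell is slightly off: his theorem, as used elsewhere in this paper, requires a \emph{completely} metrizable domain, and for general Borel maps on arbitrary metric domains $\sigma$-discreteness is delicate; for Baire-one maps one should instead note that continuous maps into a metric space are $\sigma$-discrete --- preimages of discrete families under continuous maps are discrete --- and that $\sigma$-discreteness passes to pointwise limits). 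The reduction of $(i)\Rightarrow(ii)$ to Theorem~\ref{Vesely} via the Mazurkiewicz--Moore theorem (a connected, locally connected, completely metrizable space is arcwise connected, applied also to components of open subsets, which are again completely metrizable) and via the collectionwise normality of metric spaces, giving $\Sigma(X,Y)=\Sigma^*(X,Y)$, is correct and is an efficient way to obtain the positive direction. The disconnected case of $(ii)\Rightarrow(i)$ is complete and correct; it is essentially the argument of Proposition~\ref{prop2.4} run in reverse.

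The genuine gap is the case where $Y$ is connected but not locally connected, and it is not a small one: this is precisely where the content of Fosgerau's theorem lives. You correctly isolate the two obstacles --- a countable or scattered domain makes every map Baire-one, so $X$ must be connected or at least dense-in-itself; and the failure of local connectedness at $y_0$ is not witnessed by a clopen partition of the neighbourhood $U$, so the image of a continuous approximant cannot be confined to $U$ and then split by a connectedness argument as in the disconnected case --- but you never resolve them. No space $X$ and no function $f\in\bigl(H_1(X,Y)\cap\Sigma(X,Y)\bigr)\setminus B_1(X,Y)$ are actually constructed, and the verification that a candidate $f$ lies in $H_1\cap\Sigma$ while defeating every sequence of continuous approximants is exactly the hard work (compare the second example in Section 4 of this paper, where even for a concrete graph $E\subseteq[0,1]^2$ the proof that no $B_1$-retraction exists takes a full page of argument about the compact connected images $r_n(X)$). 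As written, the proposal proves $(i)\Rightarrow(ii)$ and only half of $(ii)\Rightarrow(i)$.
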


\begin{theorem}
Let $X$ be a normal space and $E$ be an arcwise connected and locally arcwise connected metrizable ambiguous subspace of $X$. If one of the following conditions holds

(i) $E$ is separable, or

(ii) $X$ is collectionwise normal,

\noindent then $E$ is a $B_1$-retract of $X$.

\end{theorem}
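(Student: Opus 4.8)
The plan is to build a $B_1$-retraction of $X$ onto $E$ by first producing a Lebesgue-one retraction and then upgrading it to a Baire-one retraction using the Vesel\'y/Fosgerau equivalence between the first Baire and first Lebesgue classes. First I would invoke Theorem~\ref{Aust}, or rather its underlying mechanism: since $E$ is a metrizable ambiguous (hence $G_\delta$) subspace of $X$, one can apply Kuratowski's extension theorem to a metric $\varrho$ on $E$ compatible with its topology, or directly construct an $H_1$-retraction $r:X\to E$. Concretely, extend the identity map $\mathrm{id}_E:E\to E$ to a Lebesgue-one function $r:X\to E$: since $E$ is a $G_\delta$ in $X$ and $X$ is normal, this is exactly the kind of extension guaranteed by the $H_1$-retract theory (this is where I would cite the relevant result from \cite{K} for normal, rather than merely completely metrizable, ambient spaces, or reprove it for ambiguous $E$). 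So $r\in H_1(X,E)$ and $r|_E=\mathrm{id}_E$.

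Next I would check that $r$ is (strongly) $\sigma$-discrete, which is needed to feed it into Theorems~\ref{Vesely} and~\ref{Fosgerau}. In case~(i), $E$ is separable metrizable, hence second countable, so \emph{any} function into $E$ is automatically strongly $\sigma$-discrete, as noted in the excerpt; thus $r\in H_1(X,E)\cap\Sigma^*(X,E)$. In case~(ii), $X$ is collectionwise normal; the point is that a Lebesgue-one function from a (collectionwise) normal space into a metric space has a $\sigma$-discrete base — this is the Hansell-type argument alluded to in the text — and in a collectionwise normal space every $\sigma$-discrete family of sets is strongly $\sigma$-discrete, so again $r\in H_1(X,E)\cap\Sigma^*(X,E)$. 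I expect this verification — that the $H_1$-retraction can be taken strongly $\sigma$-discrete — to be the main obstacle, since it requires care about which normality hypothesis is genuinely used and possibly a direct construction of a $\sigma$-discrete base for $r$ rather than a black-box citation.

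Finally, with $r\in H_1(X,E)\cap\Sigma^*(X,E)$ in hand, I would apply Theorem~\ref{Vesely}: $X$ is normal and $E$ is an arcwise connected, locally arcwise connected metric space, so $H_1(X,E)\cap\Sigma^*(X,E)=B_1(X,E)$, whence $r\in B_1(X,E)$. Since $r|_E=\mathrm{id}_E$, $r$ is a $B_1$-retraction and $E$ is a $B_1$-retract of $X$. (In case~(i) one could alternatively route through Theorem~\ref{Fosgerau} after passing to the completion of $E$, but Theorem~\ref{Vesely} handles both cases uniformly, so I would present it that way.) The only remaining routine point is to confirm that the codomain restriction in these theorems — stated for functions \emph{into} $E$ — is legitimate, i.e. that $r$ viewed as a map $X\to E$ being Lebesgue-one and strongly $\sigma$-discrete is exactly the hypothesis the cited theorems consume.
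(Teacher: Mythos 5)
Your high-level route --- produce an $H_1$-retraction $r:X\to E$, verify that it is strongly $\sigma$-discrete, and then apply Theorem~\ref{Vesely} --- is exactly the paper's strategy. However, both of the steps you yourself flag as delicate are left open, and the citations you propose to close them with do not apply. First, the existence of the $H_1$-retraction: Theorem~\ref{Aust} (and Kuratowski's extension theorem behind it) is stated for a completely metrizable ambient space $X$, whereas here $X$ is merely normal, so neither can be invoked. The missing idea is that no extension machinery is needed at all: fix $x^*\in E$ and set $r(x)=x$ for $x\in E$, $r(x)=x^*$ for $x\in X\setminus E$. For an open $V\subseteq E$, $r^{-1}(V)$ is either $V$ or $V\cup(X\setminus E)$; since $E$ is metrizable, $V$ is $F_\sigma$ in $E$, hence $F_\sigma$ in $X$ because $E$ is $F_\sigma$ in $X$, and $X\setminus E$ is $F_\sigma$ because $E$ is $G_\delta$. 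This is where the \emph{ambiguous} hypothesis (both halves of it) actually enters; your sketch only records the $G_\delta$ half.

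Second, in case (ii) you appeal to ``a Hansell-type argument'' asserting that a Lebesgue-one function from a collectionwise normal space into a metric space has a $\sigma$-discrete base. No such theorem is available: Hansell's result quoted in the paper requires a \emph{complete metric} domain (and is used only in Theorem~\ref{t2.9}, where $X$ is a complete metric space). The paper instead builds a strongly $\sigma$-discrete base for this particular $r$ by hand: write $E=\bigcup_n F_n$ with $F_n$ closed in $X$, take a $\sigma$-discrete base $\mathcal U_n$ of each metrizable $F_n$ via Bing's theorem, observe that discreteness in the closed set $F_n$ gives discreteness in $X$ and that collectionwise normality upgrades this to strong discreteness, and adjoin the single set $X\setminus E$; the fact that this family is a base for $r$ depends on $r$ being the identity on $E$ and constant off $E$. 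So the explicit form of the retraction is not a cosmetic choice --- it is what makes both the $H_1$ verification and the base construction go through. As written, your proposal identifies the right skeleton but does not supply either of the two constructions that constitute the actual proof.
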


{\bf Proof.} Fix any point $x^*\in E$ and define
$$
r(x)=\left\{\begin{array}{ll}
  x, & \mbox{if}\,\, x\in E, \\
  x^*, & \mbox{if}\,\, x\in X\setminus E.\\
\end{array}
\right.
$$
We claim that $r$ is an $H_1$-retraction of $X$ onto $E$. Indeed, take an arbitrary open set $V$ in $E$. If $x^*\not\in V$, then $r^{-1}(V)=V$. Since $E$ is metrizable, $V$ is an $F_\sigma$-set in $E$. Moreover, $E$ is $F_\sigma$ in $X$, therefore, $V$ is $F_\sigma$ in $X$. If $x^*\in V$, then $r^{-1}(V)=V\cup (X\setminus E)$. Since $V$ and $X\setminus E$ are $F_\sigma$-sets in $X$, $r^{-1}(V)$ is also an $F_\sigma$-set in $X$.

{\it (i)} Since $E$ is a second countable space, a function $r$ is strongly $\sigma$-discrete.
According to Theorem \ref{Vesely},  $r\in B_1(X,E)$.

{\it (ii)} Show that $r:X\to E$ is strongly
$\sigma$-discrete.
Since $E$ is $F_\sigma$ in $X$, there exists an increasing sequence $(F_n)_{n=1}^\infty$ of closed subsets of $X$ such that
$E=\bigcup\limits_{n=1}^\infty {F_n}$.  Note that every metrizable space has a $\sigma$-discrete base according to Bing's Theorem \cite[p.~282]{Eng}, therefore, for every $n$ we can
choose a $\sigma$-discrete base ${\mathcal U}_n$ of $F_n$.
Then ${\mathcal U}_n=\bigcup\limits_{m=1}^\infty {\mathcal
U}_{n,m}$, where $({\mathcal U}_{n,m})_{m=1}^\infty$ is a sequence of discrete families in $F_n$, $n\in\mathbb N$.
The set $F_n$ is closed in $X$, and, consequently,
${\mathcal U}_{n,m}$ is discrete in $X$,
$n,m\in\mathbb N$. Since $X$ is collectionwise normal, the family
${\mathcal U}_{n,m}$ is strongly discrete in $X$. Then the families ${\mathcal U}_n$ and ${\mathcal
U}=\bigcup\limits_{n=1}^\infty {\mathcal U}_n$ are
strongly $\sigma$-discrete in $X$.
Let ${\mathcal B}={\mathcal U}\cup\{X\setminus E\}$. Then
${\mathcal B}$ is a strongly $\sigma$-discrete family in $X$.

We prove that ${\mathcal B}$ is a base for $r$. Let
$U$ be an open set in $E$. Then $U=\bigcup\limits_{n=1}^\infty
(U\cap F_n)$. Since $U\cap F_n$ is an open set in $F_n$ for every $n$, there exists a subfamily ${\mathcal U}_{n,U}\subseteq
{\mathcal U}_n$ such that $U\cap F_n=\bigcup\limits {\mathcal
U}_{n,U}$. If $x^*\not\in U$, then
$r^{-1}(U)=U=\bigcup\limits_{n=1}^\infty \bigcup\limits {\mathcal
U}_{n,U}$. If $x^*\in U$, then $r^{-1}(U)=U\cup (X\setminus
E)=\bigcup\limits_{n=1}^\infty \bigcup\limits {\mathcal
U}_{n,U}\cup (X\setminus E)$. Therefore, ${\mathcal B}$ is a strongly
$\sigma$-discrete base for $r$. Hence, $r\in
\Sigma^*(X,E)$.

By Theorem \ref{Vesely}, $r\in B_1(X,E)$.\hfill$\Box$

\begin{theorem}\label{t2.9}

Let $X$ be a complete metric space and $E\subseteq X$ be an arcwise connected and locally arcwise connected $G_\delta$-set. Then $E$ is a
$B_1$-retract of $X$.

\end{theorem}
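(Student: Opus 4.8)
The plan is to verify the two hypotheses of Theorem \ref{Fosgerau}(ii) and combine them with Theorem \ref{Aust}. First I would apply Theorem \ref{Aust}: since $X$ is completely metrizable and $E$ is a $G_\delta$-set in $X$, the set $E$ is an $H_1$-retract of $X$, so there is a Lebesgue-one retraction $r\colon X\to E$ with $r(x)=x$ for all $x\in E$. This is the function we will promote to a Baire-one function. The target space $E$, however, is only a $G_\delta$-subset of a complete metric space, hence completely metrizable itself (a $G_\delta$ in a complete metric space is topologically complete); together with the assumptions that $E$ is arcwise connected and locally arcwise connected — which imply $E$ is connected and locally connected — this puts $E$ in the class of spaces for which Theorem \ref{Fosgerau} gives $H_1(X,E)\cap\Sigma(X,E)=B_1(X,E)$ for every metric space $X$.

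The remaining point is to check that our particular retraction $r$ is $\sigma$-discrete, i.e.\ that $r\in\Sigma(X,E)$. Here I would invoke Hansell's theorem, quoted in the paragraph before Theorem \ref{Vesely}: every Lebesgue-one function from a complete metric space to a metric space is $\sigma$-discrete. Since $X$ is a complete metric space, $E$ is a metric space, and $r\in H_1(X,E)$, Hansell's result applies directly and yields $r\in\Sigma(X,E)$. (Note we do not even need strong $\sigma$-discreteness here, because Theorem \ref{Fosgerau} only asks for $\Sigma$, not $\Sigma^*$; but $X$ being metric is collectionwise normal, so strong $\sigma$-discreteness would be available too if one preferred to route through Theorem \ref{Vesely} instead.)

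Putting the pieces together: $r\in H_1(X,E)\cap\Sigma(X,E)$, and by Theorem \ref{Fosgerau}(ii) — whose hypothesis that $E$ is connected and locally connected we have verified, and whose hypothesis that $E$ is a complete metric space follows from $E$ being $G_\delta$ in the complete metric space $X$ — we conclude $r\in B_1(X,E)$. Since $r$ also satisfies $r(x)=x$ on $E$, it is a $B_1$-retraction of $X$ onto $E$, so $E$ is a $B_1$-retract of $X$. \hfill$\Box$

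The step I expect to require the most care is the justification that $E$ is a \emph{complete} metric space, since Theorem \ref{Fosgerau} is stated only for complete metric range spaces; this rests on the classical fact that a $G_\delta$-subset of a completely metrizable space is completely metrizable (in a suitable compatible metric), so strictly speaking one should remark that ``complete metric space'' in the hypothesis of Theorem \ref{Fosgerau} may be read as ``completely metrizable space'', which is the property $E$ enjoys. Everything else is a direct citation of the results already assembled in the paper.
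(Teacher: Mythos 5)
Your proof is correct, and it follows the paper's strategy for the first two steps --- Theorem \ref{Aust} produces an $H_1$-retraction $r:X\to E$, and Hansell's theorem shows $r$ is $\sigma$-discrete --- but it diverges at the final step: you pass through Theorem \ref{Fosgerau}, whereas the paper uses Theorem \ref{Vesely}. The paper's route is slightly more direct here: Vesel\'{y}'s theorem takes as hypotheses exactly the arcwise connectedness and local arcwise connectedness assumed of $E$ in the statement and asks nothing of $E$ beyond metrizability, so no completeness argument for $E$ is needed (the paper does need \emph{strong} $\sigma$-discreteness of $r$, which it gets from Hansell's theorem together with the collectionwise normality of the metric space $X$). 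Your route requires the extra observation --- which you correctly supply --- that a $G_\delta$-subset of a complete metric space is completely metrizable, and that the classes $H_1$, $\Sigma$ and $B_1$ are purely topological, so Fosgerau's theorem applies to $E$ equipped with a compatible complete metric. In exchange, your argument uses only connectedness and local connectedness of $E$, so it in fact establishes the formally stronger statement in which ``arcwise connected and locally arcwise connected'' is weakened to ``connected and locally connected.'' Both arguments are sound; the choice between them is a trade-off between hypotheses on the range space and the need to re-metrize.
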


{\bf Proof.} Theorem \ref{Aust} implies that there exists an $H_1$-retraction $r:X\to E$
of $X$ onto $E$. Since $X$ is complete, Hansell's Theorem \cite[Theorem 3]{Ha3} implies that  $r$ is strongly $\sigma$-discrete. According to Theorem
\ref{Vesely}, $r\in B_1(X,E)$ and, therefore, $r$ is a $B_1$-retraction of $X$ onto $E$.
\hfill$\Box$

\section{Examples.}

It is well-known that any retract of a locally connected space is also a locally connected space, and any retract of an arcwise connected space is an arcwise connected space too. We give an example which shows that for $B_1$-retracts it is not true.

We first prove the following auxiliary fact.

\begin{lemma}\label{L} Let $A=[a,b]\times [c,d]$, $B_1=A\setminus
\left((a,b)\times(c,d]\right)$ and {$B_2=A\setminus
\left((a,b)\times[c,d)\right)$}. Then $B_i$ is a retract of $A$ for every $i=1,2$.
\end{lemma}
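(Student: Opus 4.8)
The plan is to construct explicit retractions by "pushing" the missing open rectangle onto the portion of the boundary that remains in $B_i$. Consider $B_1 = A \setminus \left((a,b)\times(c,d]\right)$; this is the full rectangle $A$ with an open region removed, where the removed region is everything strictly between the vertical sides and above the bottom edge. What survives in $B_1$ is the bottom edge $[a,b]\times\{c\}$ together with the left edge $\{a\}\times[c,d]$ and the right edge $\{b\}\times[c,d]$ — an "$\sqcup$"-shaped set. The idea is that from any point $(x,y)\in A$ we move straight down to the bottom edge if $x$ is strictly inside $(a,b)$, and we leave points on the left and right edges fixed; so informally $r(x,y) = (x,c)$ for $a<x<b$ and $r(x,y)=(x,y)$ for $x\in\{a,b\}$.

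First I would check that this naive formula is already continuous, or, if the seam along the vertical sides causes trouble, interpolate in a collar. The cleanest route is a homotopy-type formula: define, for instance,
\[
r(x,y)=\Bigl(x,\; c + \varphi(x)\,(y-c)\Bigr),
\]
where $\varphi:[a,b]\to[0,1]$ is continuous with $\varphi(a)=\varphi(b)=1$ and $\varphi(x)=0$ for $x$ in some closed subinterval $[a',b']\subset(a,b)$ — but this does not yet land inside $B_1$ for $x$ near the endpoints, since such points $(x,y)$ with $0<\varphi(x)<1$ have image with $x\in(a,b)$ and second coordinate possibly in $(c,d]$. So instead I would combine a horizontal push toward the side with a vertical push toward the bottom. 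Concretely, retract first onto the union of the three closed edges by the nearest-point-type map $\rho(x,y)=$ "drag $(x,y)$ to the closest edge among left, right, bottom along the obvious straight line," which is continuous on the convex set $A$ because these three edges fit together along the two bottom corners without overlap; then check $\rho$ fixes $B_1$ pointwise. The verification that $\rho$ is well-defined and continuous at the two bottom corners $(a,c)$ and $(b,c)$, where the "which edge is closest" rule switches, is the one place needing care: I would handle it by writing $\rho$ piecewise on the closed triangles $\{(x,y):x-a\le y-c\}\cap\cdots$ obtained by the diagonals from the corners and invoking the pasting lemma, since the pieces agree on the overlaps.

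For $B_2 = A\setminus\left((a,b)\times[c,d)\right)$ the surviving set is the analogous "$\sqcap$"-shape: the top edge $[a,b]\times\{d\}$ together with the two vertical sides. The retraction is obtained from the one for $B_1$ by the reflection $(x,y)\mapsto(x, c+d-y)$, which is a homeomorphism of $A$ carrying $B_2$ onto $B_1$; conjugating the $B_1$-retraction by it gives a retraction of $A$ onto $B_2$. So only one genuine construction is needed.

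The main obstacle I anticipate is purely the continuity bookkeeping at the two corners where the removed set meets the retained boundary — everything else (that $r$ maps into $B_i$, that $r$ fixes $B_i$, that $r$ is defined on all of $A$) is immediate from the formula. I would therefore organize the write-up as: (1) give the explicit piecewise formula for $r_1:A\to B_1$; (2) verify $r_1(A)\subseteq B_1$ by inspecting the three cases; (3) verify $r_1|_{B_1}=\mathrm{id}$; (4) verify continuity via the pasting lemma on finitely many closed pieces; (5) deduce the $B_2$ case by the reflection homeomorphism. No appeal to any of the earlier Baire-class machinery is needed, since this lemma is purely about ordinary retracts.
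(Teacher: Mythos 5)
Your central construction does not work as stated. The ``nearest-point'' map onto the U-shaped set $B_1=(\{a\}\times[c,d])\cup(\{b\}\times[c,d])\cup([a,b]\times\{c\})$ is not continuous, because $B_1$ is not convex: along the diagonal $\{(x,y)\in A:\ x-a=y-c\}$ the left-edge piece assigns the value $(a,y)$ while the bottom-edge piece assigns $(x,c)$, and these coincide only at the corner $(a,c)$ itself (in $[0,1]^2$, for instance, the point $(1/2,1/2)$ is equidistant from the two candidates $(0,1/2)$ and $(1/2,0)$, which are far apart). The same happens along the diagonal from $(b,c)$. So the hypothesis of the pasting lemma --- that the closed pieces agree on their overlaps --- fails along two entire segments, not merely at the two bottom corners; the map has a genuine jump there, and no tie-breaking rule can repair it, since the obstruction is the usual discontinuity of nearest-point projection onto a non-convex set. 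Your first idea, the interpolation $r(x,y)=(x,\,c+\varphi(x)(y-c))$, you correctly discard because it does not land in $B_1$; unfortunately the replacement is also broken, so the proposal as written does not produce a retraction.

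The statement is true, and the repair is a small change of viewpoint: instead of sending each point to the nearest edge, project radially from a single external point. The paper takes $p_1=\left(\frac{a+b}{2},\,d+1\right)$ and maps $(x,y)\in A$ to the unique point where the line through $p_1$ and $(x,y)$ meets $B_1$; since $p_1$ lies outside $A$ beyond the deleted top edge, every such line meets $B_1$ in exactly one point, the resulting map is continuous, and it fixes $B_1$ pointwise. The set $B_2$ is handled symmetrically with $p_2=\left(\frac{a+b}{2},\,c-1\right)$; your observation that the reflection $(x,y)\mapsto(x,\,c+d-y)$ carries $B_2$ onto $B_1$ and so reduces the second case to the first is correct and would serve equally well once the $B_1$ case is fixed.
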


{\bf Proof.} Let $p_1=\left(\frac{a+b}{2},d+1\right)$ and
$p_2=\left(\frac{a+b}{2},c-1\right)$.  For $i=1,2$ and $(x,y)\in A$ denote by $\ell_i(x,y)$ the line which connects  $(x,y)$ and $p_i$.
Consider a function \mbox{$\varphi_i: A\to B_i$} such that $\varphi_i(x,y)$ is the point
of the intersection of $\ell_i(x,y)$ with $B_i$, $i=1,2$.
It is easy to see that for every $i=1,2$ the function $\varphi_i$ is a retraction of $A$ onto~$B_i$.
\hfill$\Box$

\begin{example} \label{ex2.5}

There exists a connected closed $B_1$-retract of $[0,1]^2$ which is neither arcwise connected nor locally connected.

\end{example}

{\bf Proof.} Let
$$E=(\{0\}\times[0,1])\cup (\bigcup\limits_{n=1}^\infty(\{\frac 1n\}\times [0,1])\cup
([\frac{1}{2n},\frac{1}{2n-1}]\times\{1\})\cup([\frac{1}{2n+1},\frac{1}{2n}]\times\{0\})).
$$

It is not difficult to check that $E$ is a connected closed subset of $[0,1]^2$, which is neither arcwise connected nor locally connected.
We show that $E$ is a $B_1$-retract of $[0,1]^2$.

Let \\
\centerline{$ A_n=\left[\frac{1}{n+1},\frac{1}{n}\right]\times
[0,1],\quad\quad B_n=\left[0,\frac
{1}{n+1}\right]\times[0,1],\quad n\ge 1, $}

\medskip

\centerline{$E_n=\left(\left[\frac{1}{n+1},\frac{1}{n}\right]\times
[0,1]\right)\setminus
\left(\left(\frac{1}{n+1},\frac{1}{n}\right)\times [0,1)\right)$
if  $n$ is an odd number,}

\medskip

\centerline{$E_n=\left(\left[\frac{1}{n+1},\frac{1}{n}\right]\times
[0,1]\right)\setminus
\left(\left(\frac{1}{n+1},\frac{1}{n}\right)\times (0,1]\right)$
if $n$ is an even number.}

By Lemma \ref{L}, $E_n$ is a retract of
$A_n$ for every $n$. Denote by $\varphi_n$ a retraction
of $A_n$ onto $E_n$, $n\in\mathbb N$. Let $\psi_n$ be a continuous function $\psi_n:B_n\to \{\frac{1}{n+1}\}\times [0,1]$, $\psi_n(x,y)=(\frac{1}{n+1},y)$.

For every $n\ge 1$ and $x,y\in [0,1]$ define
$$
r_n(x,y)=\left\{\begin{array}{ll}
  \varphi_k(x,y), & \,\,(x,y)\in A_k, 1\le k\le n,\\
  \psi_n(x,y), &  \,\, (x,y)\in B_n.\\
\end{array}
\right.
$$
The function $r_n:[0,1]^2\to \bigcup\limits_{k\le n} E_k$
is correctly defined and continuous for every $n$, since
$\varphi_k|_{A_k\cap A_{k+1}}=\varphi_{k+1}|_{A_k\cap A_{k+1}}$,
 $1\le k<n$, and $\varphi_n|_{A_n\cap B_n}=\psi_n|_{A_n\cap
B_n}$.

We show that $(r_n)_{n=1}^\infty$ is a pointwise convergent sequence on
$[0,1]^2$. Fix an arbitrary $(x,y)\in [0,1]^2$. If $x\ne 0$,
then there exists  $n_0$ such that $(x,y)\in A_{n_0}$. Then
$r_n(x,y)=\varphi_{n_0}(x,y)$ for all $n\ge n_0$, that is
$r_n(x,y)\mathop{\to}\limits_{n\to\infty} \varphi_{n_0}(x,y)$.
Note that if $(x,y)\in E$, then
$\varphi_{n_0}(x,y)=(x,y)$. If $x=0$, then
$r_n(x,y)=\psi_n(x,y)=(\frac{1}{n+1},y)\mathop{\to}\limits_{n\to\infty}
(0,y)=(x,y)$. Hence, there exists $\lim\limits_{n\to\infty} r_n(x,y)$ for each $(x,y)\in [0,1]^2$. We remark that
$\lim\limits_{n\to\infty} r_n(x,y)=(x,y)$ on~$E$. Moreover, since $E$ is closed, $\lim\limits_{n\to\infty} r_n(x,y)\in E$ for all $(x,y)\in [0,1]^2$.

Set
$r(x,y)=\lim\limits_{n\to\infty}r_n(x,y)$ for every $(x,y)\in [0,1]^2$.  Then
$r:[0,1]^2\to E$ is a  $B_1$-retraction of $[0,1]^2$ onto $E$.\hfill$\Box$

Note that for the set $E$ from the previous example there exists an
$H_1$-retraction \mbox{$r:[0,1]^2\to E$}. Though $E$ is a complete metric separable connected space, we cannot apply Theorem \ref{Fosgerau} for $r$ since $E$ is not locally connected. Therefore, it is natural to ask: is every connected $H_1$-retract of a complete metric separable connected and locally connected space its $B_1$-retract? The following example shows that the answer to this question is negative.

\begin{example}  There exists a connected  $H_1$-retract of a complete metric separable connected and locally connected space which is not its $B_1$-retract.
\end{example}

{\bf Proof.} Let ${\mathbb Q}_0={\mathbb Q}\cap [0,1]=\{q_n:n\in {\mathbb N}\}$.
For every $n\in\mathbb N$ consider a function $f_n:[0,1]\to
[-1,1]$, $f_n(x)=\sin \frac {1}{x-q_n}$ if $x\ne q_n$, and
$f_n(q_n)=0$. For every $x\in [0,1]$ define
$$
f(x)=\sum\limits_{n=1}^\infty \frac{1}{2^n}f_n(x),
$$
$$
X=[0,1]\times [-1,1]
\quad\mbox{and}\quad E={\rm Gr}(f)=\{(x,y)\in X: y=f(x)\}.
$$

For every $n$ the function $f_n:[0,1]\to [-1,1]$ belongs to the first Baire class, since it is discontinuous only in one point $x=q_n$
(see for instance \cite[p.~384]{Nat}). Moreover, $f_n$ is a Darboux function (i.e. $f_n(A)$ is connected for every connected set $A\subseteq [0,1]$) \cite[p.~13]{Bru}. Then
$f$ is a Darboux Baire-one function, as the sum of the uniform convergent series of Darboux Baire-one functions \cite[p.~13]{Bru}.
Therefore, the set $E$, as the graph of $f$, is connected  \cite[p.~9]{Bru} and
$G_\delta$ \cite[p.~393]{Ku1} in $X$. Hence,  according to Theorem \ref{Aust} the set $E$ is an $H_1$-retract of $X$.

We prove that $E$ is not a $B_1$-retract of $X$.
Assume that there exists a function $r\in B_1(X,E)$ such that $r(p)=p$
for all $p\in E$. Let $(r_n)_{n=1}^\infty$ be a sequence of continuous functions $r_n:X\to E$, which is pointwise convergent to $r$ on $X$. Since $X$ is compact and connected, $E_n=r_n(X)$ is also compact and connected for every $n$. Note that at least one of $E_n$ contains more than one point. Indeed, assume that all the sets  $E_n$ consist of one point, i.e.
$E_n=\{p_n\}$, where $p_n\in E$, $n\in \mathbb N$.
Choose two different points $p'$ and $p''$ from $E$. Then
$p_n=r_n(p')\mathop{\to}\limits_{n\to\infty} p'$ and
$p_n=r_n(p'')\mathop{\to}\limits_{n\to\infty} p''$, a contradiction. Hence, there exists a number $n_0$ such that
$E_{n_0}$ contains at least two different points (to be more precise, the cardinality of $E_{n_0}$ is equal to $\frak c$ since $E_{n_0}$ is a connected set).

Now fix $p,q\in E_{n_0}$. Since $E_{n_0}\subseteq {\rm Gr}(f)$, the points $p$ and $q$ are represented as $p=(a,f(a))$ and $q=(b,f(b))$. Without loss of generality we can assume that $a<b$. Note that
$(x,f(x))\in E_{n_0}$ for any $x\in (a,b)$. Indeed, if there exists a point $x_0\in (a,b)$ such that $(x_0,f(x_0))\not\in
E_{n_0}$, then the line $x=x_0$ does not intersect $E_{n_0}$. Then, since $E_{n_0}$ is connected, it should be completely contained either in the left hand half-plane, or in the right hand half-plane with respect to the line $x=x_0$. But this contradicts the fact that both $p$ and $q$ belong to
$E_{n_0}$.

Since ${\mathbb Q}_0$ is dense in $[0,1]$, there exists a number $k$ such that
$q_k\in (a,b)$. Note that  $f$ is discontinuous in every point of ${\mathbb Q}_0$, in particular, it is discontinuous in $q_k$.
Then there exists a sequence $(x_n)_{n=1}^\infty$, $x_n\in (a,b)$, such that $\lim\limits_{n\to\infty} x_n=q_k$, but
$\lim\limits_{n\to\infty} f(x_n)\ne f(q_k)$. Since
$(x_n,f(x_n))\in E_{n_0}$ for every $n$  and $E_{n_0}$ is closed,
the point $(\lim\limits_{n\to\infty} x_n,
\lim\limits_{n\to\infty} f(x_n))=(q_k, \lim\limits_{n\to\infty}
f(x_n))$ also belongs to $E_{n_0}$. But then it must be equal to $(q_k, f(q_k))$, a contradiction.

Hence, $E$ is not a $B_1$-retract of $X$.\hfill$\Box$

\end{document}